\definecolor{verylight}{gray}{0.97}
\definecolor{light}{gray}{0.9}
\definecolor{medium}{gray}{0.85}
\definecolor{dark}{gray}{0.6}
\def\NZQ{\Bbb}               
\def\NN{{\NZQ N}}
\def\QQ{{\NZQ Q}}
\def\ZZ{{\NZQ Z}}
\def\FF{{\NZQ F}}
\def\frk{\frak}               
\def\mm{{\frk m}}
\def\nn{{\frk n}}
\def\Phi{{\frk n}}
\def\Phi{{\frk N}}
\def\MR{{\mathcal R}}
\def\MR{{\mathcal R}}
\def\xb{{\bold x}}
\def\opn#1#2{\def#1{\operatorname{#2}}} 
\opn\chara{char} \opn\length{\ell} \opn\pd{pd} \opn\rk{rk}
\opn\projdim{proj\,dim} \opn\injdim{inj\,dim} \opn\rank{rank}
\opn\depth{depth} \opn\grade{grade} \opn\height{height}
\opn\embdim{emb\,dim} \opn\codim{codim}
\opn\Tr{Tr} \opn\bigrank{big\,rank}
\opn\superheight{superheight}\opn\lcm{lcm}
\opn\trdeg{tr\,deg}
\opn\reg{reg} \opn\lreg{lreg} \opn\ini{in} \opn\lpd{lpd}
\opn\size{size}\opn\bigsize{bigsize}
\opn\cosize{cosize}\opn\bigcosize{bigcosize}
\opn\sdepth{sdepth}\opn\sreg{sreg}
\opn\link{link}\opn\fdepth{fdepth}
\opn\deg{deg}
\opn\max{max}
\opn\indeg{indeg}
\opn\div{div} \opn\Div{Div} \opn\cl{cl} \opn\Cl{Cl}
\let\epsilon\varepsilon
\let\phi=\varphi
\let\kappa=\varkappa
\opn\Spec{Spec} \opn\Supp{Supp} \opn\supp{supp} \opn\Sing{Sing}
\opn\Ass{Ass} \opn\Min{Min}\opn\Mon{Mon} \opn\dstab{dstab} \opn\astab{astab}
\opn\Syz{Syz}
\opn\Ann{Ann} \opn\Rad{Rad} \opn\Soc{Soc}
\opn\Im{Im} \opn\Ker{Ker} \opn\Coker{Coker} \opn\Am{Am}
\opn\Hom{Hom} \opn\Tor{Tor} \opn\Ext{Ext} \opn\End{End}
\opn\Aut{Aut} \opn\id{id}
\opn\nat{nat}
\opn\pff{pf}
\opn\Pf{Pf} \opn\GL{GL} \opn\SL{SL} \opn\mod{mod} \opn\ord{ord}
\opn\Gin{Gin} \opn\Hilb{Hilb}\opn\sort{sort}
\opn\initial{init}
\opn\ende{end}
\opn\height{height}
\opn\type{type}
\opn\ldim{ldim}
\opn\aff{aff} \opn\con{conv} \opn\relint{relint} \opn\st{st}
\opn\lk{lk} \opn\cn{cn} \opn\core{core} \opn\vol{vol}
\opn\link{link} \opn\star{star}\opn\lex{lex}
\opn\gr{gr}
\def\pot#1#2{#1[\kern-0.28ex[#2]\kern-0.28ex]}
\opn\dirlim{\underrightarrow{\lim}}
\opn\inivlim{\underleftarrow{\lim}}
\let\tensor=\otimes
\let\iso=\cong
\let\Dirsum=\bigoplus
\let\to=\rightarrow
\def\Implies{\ifmmode\Longrightarrow \else
        \unskip${}\Longrightarrow{}$\ignorespaces\fi}
\def\implies{\ifmmode\Rightarrow \else
        \unskip${}\Rightarrow{}$\ignorespaces\fi}
\def\iff{\ifmmode\Longleftrightarrow \else
        \unskip${}\Longleftrightarrow{}$\ignorespaces\fi}
 \theoremstyle{plain}
\newtheorem{Theorem}{Theorem}[section]
 \newtheorem{Lemma}[Theorem]{Lemma}
 \newtheorem{Corollary}[Theorem]{Corollary}
 \newtheorem{Proposition}[Theorem]{Proposition}
 \theoremstyle{definition}
 \newtheorem{Remark}[Theorem]{Remark}
 \newtheorem{Example}[Theorem]{Example}
\let\epsilon\varepsilon
\let\kappa=\varkappa
\def\qed{\ifhmode\textqed\fi
      \ifmmode\ifinner\quad\qedsymbol\else\dispqed\fi\fi}
\def\textqed{\unskip\nobreak\penalty50
       \hskip2em\hbox{}\nobreak\hfil\qedsymbol
       \parfillskip=0pt \finalhyphendemerits=0}
\def\dispqed{\rlap{\qquad\qedsymbol}}
\opn\dis{dis}
\def\pnt{{\raise0.5mm\hbox{\large\bf.}}}
\opn\Lex{Lex}
\begin{document}
\title{Higher iterated  Hilbert coefficients of  the graded components of bigraded modules}
\author {Seyed Shahab Arkian}

\address{Seyed Shahab Arkian, University of Kurdistan, Pasdaran ST.
P.O. Box: 416, Sanandaj, Iran}

\email{Shahab\_Arkian@yahoo.com}

\begin{abstract}
Let $S=K[x_1,\ldots,x_n]$ be the polynomial ring over the field $K$, and let $I\subset S$ be a graded ideal. It is shown that the higher iterated  Hilbert coefficients of the graded $S$-modules $\Tor_i^S(M,I^k)$ and $\Ext^i_S(M,I^k)$ are polynomial functions in $k$, and an upper bound for their degree is given. These results are derived by considering suitable bigraded modules.
\end{abstract}
\thanks{}
\subjclass[2010]{Primary 13C15, 05E40, 05E45; Secondary 13D02.}
\keywords{Higher iterated  Hilbert coefficients, bigraded modules, extension functor, torsion functor }

\maketitle
\section*{Introduction}
The present paper is motivated by Kodiyalam's work  \cite{K1}, the papers by Theodorescu~\cite{T},  by Katz and Theodorescu \cite{KT1}, \cite{KT2} and the paper \cite{CKST}. In these papers it was shown  that for finitely generated $R$-modules $M$ and $N$ over a  Noetherian (local) ring $R$, and for an ideal $I\subset R$  such that the length of $\Tor_i^R(M,N/I^kN)$ is finite for all $k$, it follows that the length of $\Tor^i_R(M,N/I^kN)$  and is eventually a polynomial function in $k$. In these papers  bounds are given for the degree of these polynomials. In some cases also the leading coefficient is determined.  Similar results have been proved  for the $\Ext$-modules.

In this paper we consider a related problem. Here $I\subset S$ is  graded ideal and $S$ is the polynomial ring. It is shown in Corollary~\ref{dimension} that for any  finitely generated graded $S$-module $M$, the modules $\Tor_i^S(M,I^k)$ are finitely graded $S$-modules which for $k\gg 0$ have constant Krull dimension, and  furthermore in Corollary~\ref{ik} it is shown that the higher iterated    Hilbert coefficients (which appear  as the coefficients of the higher iterated  Hilbert polynomials) are all polynomials functions. A related result has been shown in \cite{HPV}  for the  case $M/I^kM$ and in \cite{HW} for the case $\Tor_i^s(S/\mm, I^k)$, where $\mm$ denotes the graded maximal ideal of $S$.

Observe  that knowing all higher iterated Hilbert coefficients of a graded module is equivalent to knowing its $h$-vector, and hence the Hilbert series of the module. This is the reason why we are not only interested in the ordinary Hilbert coefficients, but in all higher iterated Hilbert coefficients.

For the proof we use a technique which was first introduced by Kodiyalam \cite{K2}. For this purpose we consider the bigraded $K$-algebra $A=K[x_1,\ldots,x_n,y_1,\ldots,y_m]$ with $\deg x_i=(1,0)$ and $\deg y_j=(p_j,1)$ for all $i$ and $j$, and  a finitely generated  bigraded $A$-module $M$.   A typical example of such an $A$-module is the Rees algebra of a graded ideal $I\subset S$ with $I=(f_1,\ldots,f_m)$ and $\deg f_j=p_j$ for all $j$. For each $k$, the $S$-module $M_k=\Dirsum_i M_{(i,k)}$ is a finitely generated graded $S$-module. A graded free $S$-resolution of $M_k$ can be obtained by the graded components of the bigraded free $A$-module resolution of $M$. These resolutions are then used to compute the higher iterated Hilbert polynomials of the graded $S$-modules $M$.

The first (and important step) is to show that  the higher iterated  Hilbert coefficients of the components $A(-a,-b)_k$ of the bi-shifted free $A$-module $A(-a,-b)$ are  polynomial functions in $k$ for $k\gg 0$, see Proposition~\ref{rain}.  This result and the bigraded resolution $\FF$ of the bigraded $A$-module $M$ is then used in the next section to prove the same result for $M$. There, by using a graded version of the Noether Normalization Theorem, one obtains in Theorem~\ref{now main} and Theorem~\ref{main} upper bounds for the degree of these polynomials. The better bound for the degree of the polynomial function representing the Hilbert coefficient  $e^i_j(M_k)$ is achieved when all $p_t$ are the same and it is given by $\deg e^i_j(M_k)\leq  \dim M/\mm M+j-1$. These results are then  applied in Section~\ref{3} to show that for any finitely generated graded $S$-module $M$, and any finitely generated  bigraded module $N$, the higher iterated  Hilbert coefficients of the graded $S$-modules $\Tor_i^S(M,N_k)$ and $\Ext^i_s(M,N_k)$ are polynomial functions in $k$  for $k\gg 0$.

\section{The graded components of a bigraded module and their  higher iterated Hilbert coefficients}
\label{1}
Let $K$ be a field, $S=K[ x_{1},\ldots,x_{n}]$ the polynomial ring in $n$ variables with  the standard  grading. Let $A=K[ x_{1},\ldots,x_{n},y_{1},\ldots,y_{m}]$ with bigrading defined by $\deg x_{i} = (1,0)$ and $\deg y_{j}=(p_j,1)$, for some some integers $p_j\geq 0$.

For a finitely generated bigraded $A$-module $M= \Dirsum_{i,j\in\ZZ} M_{(i,j)}$, we define $M_{k}$ to be the graded
$S$-module $ \Dirsum_{i\in \ZZ}M_{(i,k)}$.

These definitions are motivated by the following important class of examples: Let $I\subset S$ be a graded ideal generated
by the homogeneous polynomials $f_1,\ldots,f_m$ with $\deg f_j=p_j$. Then the Rees ring $\MR(I)=\Dirsum_{k\geq 0}I^k$ is  bigraded $A$-module with $\MR(I)_k=I^k$ for all $k$.

For $ a,b\in \ZZ$,  the twisted module $A$-module  $ M(-a,-b)$ is defined to be the bigraded $A$-module with components  $M(-a,-b)_{(i,j)}=M(i-a,j-b)$.

In this section we   want to compute the Hilbert coefficients of the $S$-module $A(-a, -b)_k$   as a function of $k$.

Note that
\begin{equation}
\label{soonlunch}
A(-a, -b)_k = (A_{k-b})(-a) \iso \Dirsum_{\atop\beta_{1}+\cdots+\beta_{m}=k-b}S(-(p_1\beta_1+\cdots+p_m\beta_m)-a)y_1^{\beta_1}\cdots y_m^{\beta_m}.
\end{equation}
Hence, in a first step, we have to determine the  Hilbert  coefficients of $S(-c)$ for some $c\in \ZZ$.

Recall that for a finite graded $ S $-module $ M $ and all $ k\gg0 $, the numerical function $H(M,k)= \dim_K M_{k}$ is called the Hilbert function of $M$. For $ i\in \NN $, the higher iterated Hilbert functions $H_{i}(M,k)$ are defined recursively as follows:
\[
   H_{0}(M,k)= H(M,k), \quad  \text{and}   \quad  H_{i}(M,k)=\sum_{j\leq k}H_{i-1}(M,j).
\]
By Hilbert it is known that $H_{i}(M,k)$ is of polynomial type of degree $d+i-1$, where $d$ is the Krull dimension of $M$. In other words,  there exists a polynomial $P^i_M(x)\in\QQ[x]$ of degree $ d+i-1 $ such that  $H_{i}(M,k)=P^i_M(k)$ for all $k\gg 0$. This unique polynomial is called the {\em $i$th Hilbert polynomial} of $M$. It can be written in the form
\[
P^{i}_{M}(x)=\sum^{d+i-1}_{j=0}(-1)^{j}e^{i}_{j}(M)\binom{x+d+i-j-1}{d+i-j-1}
\]
with integer coefficients $e^i_j(M) $, called the {\em higher iterated  Hilbert coefficients} of $M$, where by definition
\[
\binom{i}{j}=\frac{i(i-1)\cdots(i-j+1)}{j(j-1)\cdots 2\cdot 1}\quad  \text{if} \quad j>0 \quad  \text{and} \quad \binom{i}{0}=1.
\]

\medskip
In  the important special case when $M=S$ we have
\begin{eqnarray*}
\label{koeln}
 P^{i}_{S}(x)=\binom{x+n+i-1}{n+i-1}.
 \end{eqnarray*}

More generally, if $c\in \ZZ$, then
\begin{eqnarray}
\label{kiana}
 P^{i}_{S(-c)}(x)={x-c+n+i-1 \choose n+i-1}
\end{eqnarray}
In particular,  $\deg P^{i}_{S(-c)}(x)=n+i-1$.

\medskip
In order to compute the higher iterated Hilbert coefficients of $M$, we define the {\em difference operator} $ \Delta $ on the set of polynomial functions by setting $(\Delta P)(a)=P(a)-P(a-1) $ for all $ a\in \ZZ$. The $d$ times iterated
$\Delta$ operator will be denoted by $ \Delta^{d} $. We further set $ \Delta^{0}P=P $.

\medskip
For our further considerations we shall need the following easy lemma  whose proof we omit.

\begin{Lemma}
Let $ P(x)=\sum_{i=0}^{n}(-1)^{i}f_{i}\binom{x+n-i}{n-i}$. Then
\begin{eqnarray*}
\label{delta}
 (\Delta^{j}P)(-1)=(-1)^{n-j}f_{n-j}, \quad    \text{for}  \quad j=0,1,...,n.
 \end{eqnarray*}
\end{Lemma}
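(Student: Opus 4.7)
The plan is to exploit the fact that the polynomials $\binom{x+m}{m}$ (for varying $m\ge 0$) form a $\QQ$-basis of $\QQ[x]$ that is essentially shifted by $\Delta$. Once this is established, evaluating $\Delta^{j}P$ at $x=-1$ will pick out exactly one summand.

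First, a short application of Pascal's identity gives
\[
\Delta\binom{x+m}{m}=\binom{x+m}{m}-\binom{x+m-1}{m}=\binom{x+m-1}{m-1}
\]
for $m\ge 1$. Iterating this identity $j$ times yields
\[
\Delta^{j}\binom{x+m}{m}=\binom{x+m-j}{m-j}\qquad\text{for }0\le j\le m,
\]
while for $j>m$ one gets $0$ (after $m$ iterations one has reached the constant polynomial $1$, which is then killed by $\Delta$).

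Next, I would evaluate this identity at $x=-1$. Using the polynomial convention $\binom{s}{k}=s(s-1)\cdots(s-k+1)/k!$ recalled just before the lemma, the value $\binom{m-j-1}{m-j}$ vanishes whenever $m-j\ge 1$ (the numerator contains a factor $0$), while $\binom{-1}{0}=1$.

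Finally, by linearity of $\Delta^{j}$ applied to $P(x)=\sum_{i=0}^{n}(-1)^{i}f_{i}\binom{x+n-i}{n-i}$, the only summand (taking $m=n-i$) that survives in $(\Delta^{j}P)(-1)$ is the one with $n-i-j=0$, i.e.\ $i=n-j$. It contributes $(-1)^{n-j}f_{n-j}\cdot 1$, which is the desired formula. Since the argument reduces to a direct combinatorial identity, there is no real obstacle; the only care needed is in the sign and index bookkeeping and in the correct interpretation of $\binom{-1}{0}$ via the stated polynomial convention.
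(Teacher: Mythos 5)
Your proof is correct: the identity $\Delta^{j}\binom{x+m}{m}=\binom{x+m-j}{m-j}$ via Pascal's rule, the vanishing of $\binom{m-j-1}{m-j}$ for $m-j\geq 1$ under the polynomial convention, and the resulting survival of only the $i=n-j$ term are all sound. The paper explicitly omits the proof of this lemma, and your argument is precisely the standard one it has in mind.
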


Applying this formula  to the higher iterated  Hilbert  polynomials of $M$ we obtain
\begin{eqnarray}
\label{eij}
e^i_j(M)=(-1)^{j}\Delta^{d+i-j-1}P^i_M(-1) \quad \text{for} \quad j=0,\ldots, d+i-1,
\end{eqnarray}
where $d=\dim M$.

\medskip
Since $\Delta P_M^i=P_M^{i-1}$  for all $i \geq 1$, formula (\ref{eij}) yields

\begin{Corollary}
\label{reject}  $e^i_j(M)=e^{i-1}_j(M)$ for $j=0,\ldots,d+i-2$, where $d=\dim M$.
\end{Corollary}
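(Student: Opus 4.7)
The plan is to derive the identity directly from formula~(\ref{eij}) by using the relation $\Delta P^i_M = P^{i-1}_M$ that is already highlighted in the text. First I would note that the relation itself is immediate from the definitions: since $H_i(M,k) = \sum_{j\leq k} H_{i-1}(M,j)$, one has $H_i(M,k)-H_i(M,k-1) = H_{i-1}(M,k)$, and passing to the polynomial representations valid for $k\gg 0$ yields $\Delta P^i_M = P^{i-1}_M$.

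Next I would write out both sides of the desired equality via formula~(\ref{eij}). On the one hand,
\[
e^i_j(M) = (-1)^{j}\,\Delta^{d+i-j-1}P^i_M(-1),
\]
and on the other hand, with $i$ replaced by $i-1$ and noting that $\dim M$ is unchanged,
\[
e^{i-1}_j(M) = (-1)^{j}\,\Delta^{d+i-j-2}P^{i-1}_M(-1).
\]
The restriction $j\leq d+i-2$ guarantees $d+i-j-1\geq 1$, so I can peel off one difference operator and rewrite
\[
\Delta^{d+i-j-1}P^i_M = \Delta^{d+i-j-2}\bigl(\Delta P^i_M\bigr) = \Delta^{d+i-j-2}P^{i-1}_M.
\]
Evaluating at $-1$ and comparing the two displayed formulas gives the identity.

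There is no real obstacle here; the only small point to check is the admissible range of $j$, which is precisely what is needed to ensure that at least one $\Delta$ can be moved inside to convert $P^i_M$ into $P^{i-1}_M$. Outside this range (i.e.\ for $j=d+i-1$) the exponent of $\Delta$ would be zero and the manipulation would be invalid, which is exactly why the corollary is stated only up to $j=d+i-2$.
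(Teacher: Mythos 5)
Your proof is correct and follows exactly the paper's (unwritten but clearly indicated) argument: apply formula~(\ref{eij}) to both $e^i_j$ and $e^{i-1}_j$ and use $\Delta P^i_M=P^{i-1}_M$ to peel off one difference operator, with the range $j\le d+i-2$ ensuring the exponent of $\Delta$ stays nonnegative. Nothing to add.
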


Having in mind Corollary~\ref{reject},  we set $e_j(M)=e^0_j(M)$ for $j=0,\ldots,d-1$, and $e_j(M)=e^{j-(d-1)}_j(M)$ for $j\geq d$. Then for all $i$ it follows that  $e^i_j(M)=e_j(M)$  for $j=0,\ldots,i+d-1$.  Therefore,
\[
P^{i}_{M}(x)=\sum^{d+i-1}_{j=0}(-1)^{j}e_j(M)\binom{x+d+i-j-1}{d+i-j-1}.
\]
Let $M$ be a finitely generated  graded $S$-module of dimension $d$, generated in non-negative degrees, and let $H_M(t)$ be the Hilbert series of $M$. Then  there exists a unique polynomial $Q_M(t)\in \QQ [t]$  such that
$ H_M(t)=Q_M(t)/(1-t)^d.$.  Let $Q_M(t)=\sum_{i=0}^sh_it^i$ with $h_s\neq 0$. The coefficient vector $(h_0,\ldots,h_s)$ of $Q_M(t)$ is called the $h$-vector of M. The following relation between the iterated Hilbert coefficients and the $h$-vector of $M$ is well known.

\medskip
\begin{enumerate}
\item[(i)] $e_j=\sum_{i=j}^s \binom{i}{j}h_i$ for $j=0,\ldots,s$ and $e_j=0$ for $j>s$.
\item[ (ii)] $h_i=\sum_{j=i}^s(-1)^{j-i} \binom{j}{i}e_j$ for $i=1,\ldots,s$.
\end{enumerate}

These relations show that the set of higher iterated Hilbert coefficients determine the Hilbert series of $M$ completely.

\begin{Proposition}
\label{noweasy}
 Let $c\in \ZZ$. Then
the higher iterated  Hilbert  coefficients of $S(-c)$ are
\[
e^i_j(S(-c))=\binom{c}{j}\quad  \text{for all $i\geq 0$  and all $j$ with  $0\leq j\leq   n+i-1$}.
 \]
In particular, $e^i_j(S(-c))=0$ if and only if $0\leq c<j\leq n+i-1$.
\end{Proposition}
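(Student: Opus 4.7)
The plan is to reduce the proposition to a direct binomial computation using the explicit formula (\ref{kiana}) for $P^i_{S(-c)}(x)$ together with the extraction formula (\ref{eij}).

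First, since $\dim S(-c) = n$, formula (\ref{eij}) specializes to
\[
e^i_j(S(-c)) = (-1)^j\,(\Delta^{n+i-j-1} P^i_{S(-c)})(-1).
\]
So I would start by applying the difference operator to the right-hand side of (\ref{kiana}), $P^i_{S(-c)}(x) = \binom{x-c+n+i-1}{n+i-1}$. A direct consequence of Pascal's rule is $\Delta \binom{x+a}{b} = \binom{x+a-1}{b-1}$, and iterating this $r$ times one obtains $\Delta^r \binom{x-c+n+i-1}{n+i-1} = \binom{x-c+n+i-1-r}{n+i-1-r}$. Setting $r = n+i-j-1$ and then $x = -1$ yields
\[
(\Delta^{n+i-j-1} P^i_{S(-c)})(-1) = \binom{j-1-c}{j}.
\]

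The second key step is the upper-negation identity $\binom{j-1-c}{j} = (-1)^j\binom{c}{j}$, which falls out of the product formula: the numerator $(j-1-c)(j-2-c)\cdots(-c)$ has $j$ factors, and factoring $-1$ from each rewrites it as $(-1)^j\,c(c-1)\cdots(c-j+1)$. Combining with the $(-1)^j$ from (\ref{eij}) gives $e^i_j(S(-c)) = (-1)^{2j}\binom{c}{j} = \binom{c}{j}$, as required.

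For the ``in particular'' assertion, I would simply note that for $j \geq 1$ the product $c(c-1)\cdots(c-j+1)$ contains a zero factor precisely when $0 \leq c \leq j-1$; if $c < 0$ all factors are strictly negative and hence nonzero, while $j = 0$ gives $\binom{c}{0} = 1$. This is pure binomial bookkeeping, so I do not anticipate a genuine obstacle: the only care required is in keeping track of the exponent $n+i-j-1$ of the difference operator and in invoking the upper-negation identity with the correct sign.
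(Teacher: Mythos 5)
Your proposal is correct and follows essentially the same route as the paper: apply the iterated difference operator to the explicit formula (\ref{kiana}), evaluate at $x=-1$ via formula (\ref{eij}), and finish with the upper-negation identity $(-1)^j\binom{j-1-c}{j}=\binom{c}{j}$. Your treatment of the vanishing criterion in the ``in particular'' clause is also correct (the paper leaves that part implicit).
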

\begin{proof}
We have $\Delta^{j} P^{i}_{S(-c)}(x)=\binom{x-c+n+i-j-1}{n+i-j-1},$
and hence by formula (\ref{eij}) we get
 \[
e^i_j(S(-c))= (-1)^j\binom{j-c-1}{j}=\binom{c}{j}.
\]
\end{proof}

Now by using Proposition~\ref{noweasy}  we can give an upper bound for  the higher iterated  Hilbert coefficients of  $A(-a,-b)_{k}$.
 Before that we need an elementary lemma.
\begin{Lemma}
\label{skype}
Let $P(x)\in Q[x]$ be a polynomial of degree $d$ . Then $F(k)=\sum_{j=0}^kP(j)$ is a polynomial  in $k$ of degree $d+1$.
\end{Lemma}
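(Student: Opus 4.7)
The plan is to expand $P$ in the binomial basis and then apply the hockey-stick identity termwise.

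First, I would observe that the polynomials $\bigl\{\binom{x+i}{i}\bigr\}_{i\geq 0}$ form a $\QQ$-basis of $\QQ[x]$ (since $\binom{x+i}{i}$ has degree $i$, the set is triangular with respect to the monomial basis after normalizing leading coefficients). Hence any $P(x)\in\QQ[x]$ of degree $d$ can be written uniquely as
\[
P(x)=\sum_{i=0}^{d}a_i\binom{x+i}{i}
\]
with $a_i\in\QQ$ and $a_d\neq 0$.

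Next I would invoke the hockey-stick identity
\[
\sum_{j=0}^{k}\binom{j+i}{i}=\binom{k+i+1}{i+1},
\]
which is easily verified by induction on $k$ using Pascal's rule, exactly in the spirit of the difference-operator formalism already introduced in the paper (it expresses that $\binom{x+i+1}{i+1}$ is an antidifference of $\binom{x+i}{i}$).

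Combining the two, I would get
\[
F(k)=\sum_{j=0}^{k}P(j)=\sum_{i=0}^{d}a_i\sum_{j=0}^{k}\binom{j+i}{i}=\sum_{i=0}^{d}a_i\binom{k+i+1}{i+1}.
\]
Each summand is a polynomial in $k$, and the $i=d$ term $a_d\binom{k+d+1}{d+1}$ has degree exactly $d+1$ with nonzero leading coefficient $a_d/(d+1)!$, while the remaining terms have degree at most $d$. Therefore $F(k)$ is a polynomial in $k$ of degree $d+1$, completing the proof.

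There is essentially no real obstacle here; the only point requiring care is to justify that $\bigl\{\binom{x+i}{i}\bigr\}$ is indeed a $\QQ$-basis of $\QQ[x]$, but this is standard and fits naturally with the binomial notation used throughout the paper.
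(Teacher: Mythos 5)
Your proof is correct, but it takes a different route from the paper's. The paper expands $P$ in the monomial basis, $P(x)=\sum_{i=0}^d a_i x^i$, interchanges the order of summation, and appeals to the well-known fact that the power sum $\sum_{j=0}^k j^i$ is a polynomial in $k$ of degree $i+1$ (Faulhaber). You instead expand $P$ in the binomial basis $\bigl\{\binom{x+i}{i}\bigr\}$ and sum each basis element exactly via the hockey-stick identity. The two arguments are structurally parallel (decompose, sum termwise, read off the top-degree term), but yours is more self-contained: the hockey-stick identity is a one-line induction with Pascal's rule, whereas the paper's cited fact about power sums is itself usually proved by exactly the kind of binomial-basis argument you give. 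Your version also produces a closed formula $F(k)=\sum_i a_i\binom{k+i+1}{i+1}$ and an explicit leading coefficient, and it meshes with the binomial-coefficient formalism used elsewhere in the paper (e.g.\ Proposition~\ref{noweasy} and the difference operator $\Delta$). The only point you flag as needing care --- that the $\binom{x+i}{i}$ form a $\QQ$-basis --- is indeed immediate from the degrees being distinct, so there is no gap.
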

\begin{proof}
Let $P(x)=\sum_{i=0}^da_ix^i$. Then $F(k)=\sum_{j=0}^k\sum_{i=0}^da_ij^i=\sum_{i=0}^da_i(\sum_{j=0}^kj^i)$. It is well-known that  $\sum_{j=0}^kj^i$ is a polynomial in $k$ of degree $i+1$.  So $F(k)$ is a polynomial  in $k$ of degree $d+1$.
\end{proof}
In proof of the  next proposition we use Lemma~\ref{skype} and also the fact that
for all $a,c,j$ we have
\begin{eqnarray*}
\binom{c+a}{j}=\sum_{i=0}^{a}\binom{a}{i}\binom{c}{j-i}.
\end{eqnarray*}

\begin{Proposition}
\label{rain}
For $k\gg 0$, the higher iterated  Hilbert coefficients $e_j^i(A(-a,-b)_{k})$ are polynomial functions of degree $m+j-1$ with
\[
 e_j^i(A(-a,-b)_{k})\leq \binom{k-b+m-1}{m-1}\binom{p_m(k-b)+a}{j}.
\]
 Equality holds, if and only if $p_1=p_2=\cdots = p_m$ for all $j$.
\end{Proposition}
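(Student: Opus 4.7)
The plan is to start from the decomposition (\ref{soonlunch}), which writes $A(-a,-b)_k$ as a direct sum of shifted copies of $S$ indexed by the lattice points $(\beta_1,\ldots,\beta_m)\in\NN^m$ with $\beta_1+\cdots+\beta_m=k-b$. Since higher iterated Hilbert coefficients are additive on finite direct sums, Proposition~\ref{noweasy} yields
\[
e_j^i(A(-a,-b)_k)=\sum_{\substack{\beta_1+\cdots+\beta_m=k-b\\ \beta_t\geq 0}}\binom{p_1\beta_1+\cdots+p_m\beta_m+a}{j},
\]
and the proposition reduces to analyzing this sum as a function of $k$.

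For the inequality I would relabel (if necessary) so that $p_m=\max_t p_t$. For $k\gg 0$ every argument $p_1\beta_1+\cdots+p_m\beta_m+a$ is nonnegative and bounded above by $p_m(\beta_1+\cdots+\beta_m)+a=p_m(k-b)+a$. The identity $\binom{x+1}{j}-\binom{x}{j}=\binom{x}{j-1}$ shows that $\binom{\cdot}{j}$ is nondecreasing on $\NN$ and, for $j\geq 1$, strictly increasing once $x\geq j-1$. Hence each summand is bounded by $\binom{p_m(k-b)+a}{j}$, and since the simplex contains $\binom{k-b+m-1}{m-1}$ lattice points the claimed bound follows. For the equality part with $j\geq 1$, strict monotonicity forces $\sum_t(p_m-p_t)\beta_t=0$ for every lattice point $\beta$; specializing to $\beta=(k-b)e_t$ gives $p_t=p_m$ for each $t$. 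Conversely, if all $p_t$ coincide, every argument equals $p_m(k-b)+a$ and equality is automatic.

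To prove polynomiality and pin down the exact degree $m+j-1$, I would iterate the identity $\binom{c+a}{j}=\sum_i\binom{a}{i}\binom{c}{j-i}$ stated just before the proposition to split off each $p_t\beta_t$ in turn, obtaining
\[
\binom{p_1\beta_1+\cdots+p_m\beta_m+a}{j}=\sum_{i_0+i_1+\cdots+i_m=j}\binom{a}{i_0}\prod_{t=1}^m\binom{p_t\beta_t}{i_t}.
\]
Each factor $\binom{p_t\beta_t}{i_t}$ is a polynomial in $\beta_t$ of degree $i_t$ with leading coefficient $p_t^{i_t}/i_t!$; expanding it in the $\QQ$-basis $\{\binom{\beta_t}{\ell}\}_{\ell\leq i_t}$ and invoking the Vandermonde-type identity
\[
\sum_{\substack{\beta_1+\cdots+\beta_m=L\\ \beta_t\geq 0}}\prod_{t=1}^m\binom{\beta_t}{\ell_t}=\binom{L+m-1}{\ell_1+\cdots+\ell_m+m-1}
\]
(checked in one line via $\sum_{\beta\geq 0}\binom{\beta}{\ell}z^\beta=z^\ell/(1-z)^{\ell+1}$) writes $e_j^i(A(-a,-b)_k)$ as a $\QQ$-linear combination of binomials $\binom{k-b+m-1}{\ell_1+\cdots+\ell_m+m-1}$ with $\sum_t\ell_t\leq j$. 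This is manifestly polynomial in $k$ of degree at most $m+j-1$, and the coefficient of $k^{m+j-1}$ can only arise from $i_0=0$ together with $\ell_t=i_t$ and $\sum_ti_t=j$; assembling the leading coefficients yields $\frac{1}{(m+j-1)!}\sum_{i_1+\cdots+i_m=j}\prod_tp_t^{i_t}$, which is positive whenever not all $p_t$ vanish, so the degree is exactly $m+j-1$. The main bookkeeping obstacle is precisely this last step: tracking the two basis changes (the $\binom{c+a}{j}$ identity and $\binom{p_t\beta_t}{i_t}$ rewritten in the $\binom{\beta_t}{\ell}$ basis) to confirm that no cancellation occurs in the top-degree coefficient, whereas the monotonicity arguments for the bound and the equality condition are routine.
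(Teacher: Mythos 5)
Your proof is correct, and while the reduction to $\sum_{|\beta|=k-b}\binom{p\beta+a}{j}$, the lattice-point count, the majorization of each summand, and the specialization $\beta=(k-b)e_t$ in the equality analysis all coincide with the paper's argument, your treatment of polynomiality and of the exact degree is genuinely different. The paper proceeds by induction on $m$: it splits off $\beta_1$, replaces the weights by $p_t-p_1$, absorbs $p_1(k-b)$ into a linear function $\ell(k)$, and invokes Lemma~\ref{skype}; the degree is then read off by sandwiching the sum between $\binom{k-b+m-1}{m-1}\binom{p_1(k-b)+a}{j}$ and the stated upper bound. You instead expand each summand by iterated Vandermonde and evaluate the simplex sum in closed form via $\sum_{|\beta|=L}\prod_t\binom{\beta_t}{\ell_t}=\binom{L+m-1}{\ell_1+\cdots+\ell_m+m-1}$. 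This buys an explicit leading coefficient $\frac{1}{(m+j-1)!}\sum_{i_1+\cdots+i_m=j}p_1^{i_1}\cdots p_m^{i_m}$, and it is in fact more robust at the degree step: when $p_1=0$ and $j\geq1$ the paper's lower bound degenerates to degree $m-1$, so the sandwich alone does not force degree $m+j-1$, whereas your computation shows the degree is exactly $m+j-1$ precisely when some $p_t>0$ (and correctly detects that for $p_1=\cdots=p_m=0$ the coefficient is $\binom{k-b+m-1}{m-1}\binom{a}{j}$, of degree only $m-1$, a case the statement tacitly excludes). One small caution: your monotonicity step assumes every argument $p\beta+a$ is nonnegative for $k\gg0$, which can fail if some $p_t=0$ and $a<0$; the paper makes the same tacit assumption, and in the intended application (shifts coming from resolutions of modules generated in nonnegative degrees) it is harmless, but it is worth flagging.
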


\begin{proof}
Without restriction we may assume that $p_1\leq p_2\leq \ldots \leq p_m$.

For $\beta=(\beta_1,\ldots,\beta_m)\in \ZZ^m_{\geq 0}$ and $p=(p_1,\ldots,p_m)$ we set  $|\beta|=\sum_{i=1}^m\beta_i$ and $p\beta=\sum_{i=1}^m p_i\beta_i$. Furthermore, let  $C(k-b)=\{\beta \: | \beta|=k-b\}$.

By \eqref{soonlunch}, the $S$-module $A(-a,-b)_{k}$ is the direct sum of the shifted free $S$-modules $S(-p\beta-a)$ with $\beta\in C(k-b)$.

Therefore,
\begin{eqnarray*}
P^{i}_{A(-a,-b)_{k}}(x) = \sum_{\beta \in C(k-b)} P^{i}_{S(-p\beta-a)}(x).
\end{eqnarray*}
Since $\deg P^{i}_{S(-p\beta-a)}(x)=n+i-1$ for all $\beta\in C(k-b)$, by Proposition ~\ref{noweasy} we get
\begin{eqnarray}
\label{minaishere}
e^{i}_{j}(A(-a,-b)_{k})&=& \sum_{\beta\in C(k-b)}e^{i}_{j}(S(-p\beta-a)) \\
&=&\sum_{\beta\in C(k-b)}\binom{p\beta+a}{j}.\nonumber
\end{eqnarray}

We show by induction on $m$ that  $\sum_{\beta\in C(k-b)}\binom{p\beta+a}{j}$ is a polynomial function in $k$.  In order to let the induction work we actually show more generally that  $\sum_{\beta\in C(k-b)}\binom{p\beta+\ell(k)}{j}$ is polynomial in $k$ where $\ell(k)$ is linear function of $k$.

If $m=1$, then
 \begin{eqnarray*}
 \sum_{\beta_1=0}^{k-b}\binom{p_1\beta_1+\ell(k)}{j}&=&\sum_{\beta_1=0}^{k-b}\sum_{i=0}^j\binom{p_1\beta_1}{j-i}\binom{\ell(k)}{i}\\
 &=&\sum_{i=0}^{j}\binom{\ell(k)}{i}\sum_{\beta_1=0}^{k-b}\binom{p_1\beta_1}{j-i}
 \end{eqnarray*}
 is  polynomial in $k$, because,  by (\ref{skype}),  $\sum_{\beta_1=0}^{k-b}\binom{p_1\beta_1}{j-i}$ is  polynomial in $k$. Now assume that $m>1$. We set
\[
F(k-b)=\sum_{\beta\in C(b-k)}\binom{p\beta+\ell(k)}{j}.
\]
Then
\begin{eqnarray*}
F(k-b)&=&\sum_{\beta_1=0}^{k-b}\sum_{\beta'\in C'(b-k-\beta_1)}\binom{p'\beta'+\ell(k)}{j}\\
&=&\sum_{\beta_1=0}^{k-b}F'(k-b-\beta_1)
\end{eqnarray*}
where $\beta'=(\beta_2,\ldots,\beta_m)$, $p'=(p_2-p_1,\ldots,p_m-p_1)$ , $\ell'(k)=\ell(k)+p_1(k-b)$, $C'(k-b-\beta_1)=\{\beta'\mid  |\beta'|=k-b-\beta_1\}$ and $F'(k-b-\beta_1)=\sum_{\beta'\in C'(b-k-\beta_1)}\binom{p'\beta'+\ell'(k)}{j}$.

By our  induction hypothesis,
  $F'(k-b-\beta_1)$ is polynomial in $k$.
Therefore by (\ref{skype}),   $\sum_{\beta_1=0}^{k-b}F'(k-b-\beta_1)=\sum_{i=0}^{k-b}F'(i)$ is polynomial in $k$.

These considerations together with \label{minaishere} show  that  $e^{i}_{j}(A(-a,-b)_{k}$ is polynomial function in $k$. Since
\[
\binom{k-b+m-1}{m-1}\binom{p_1(k-b)+a}{j}\leq e^{i}_{j}(A(-a,-b)_{k})\leq
\binom{k-b+m-1}{m-1}\binom{p_m(k-b)+a}{j},
\]
and since these lower and upper bounds are polynomial functions of degree $m+j-1$ with non-negative leading coefficient, we conclude that  the degree of the polynomial functions $e^{i}_{j}(A(-a,-b)_{k})$ is $m+j-1$, as well. Furthermore, it follows that  $e^{i}_{j}(A(-a,-b)_{k})=\binom{k-b+m-1}{m-1}\binom{p_m(k-b)+a}{j}$ if all $p_t$ are the same.

Conversely, since  $\binom{p\beta+a}{j}\leq \binom{p_m(k-b)+a}{j}$ for all summands $\binom{p\beta+a}{j}$ of   $e^{i}_{j}(A(-a,-b)_{k})$ it follows that $e^{i}_{j}(A(-a,-b)_{k})=\binom{k-b+m-1}{m-1}\binom{(k-b)p_m+a}{j}$ if and only if $\binom{p\beta+a}{j}= \binom{p_m(k-b)+a}{j}$ for all $\beta\in (k-b)$. In particular, if $\beta=(k-b,0,\ldots,0)$, then $\binom{p\beta+a}{j}=\binom{p_1(k-b)+a}{j}=\binom{p_m(k-b)+a}{j}$. It follows that $p_1=p_m$, and hence  $p_i=p_m$ for all  $i$.
\end{proof}

\section{The higher iterated  Hilbert coefficients of the graded components of a bigraded $A$-module}
\label{2}
Let $K$ be a field, $S=K[ x_{1},\ldots,x_{n}]$ the polynomial ring in $n$ variables with  the standard  grading, and  let as before  $A=K[ x_{1},\ldots,x_{n},y_{1},\ldots,y_{m}]$ be the polynomial ring with bigrading defined by $\deg x_{i} = (1,0)$ and $\deg y_{j}=(p_j,1)$, for some some integers $p_j\geq 0$.

Let $M$ be a finitely generated bigraded $A$-module. As before we set $M_k=\Dirsum_i M_{(i,k)}$. Then each $M_k$ is a finitely generated graded $S$-module. In this section we want to study the higher iterated   Hilbert coefficients $e_j^i(M_{k})$. We set $\mm=(x_1,\ldots, x_n)$ and $\nn=(y_1,\ldots,y_m)$. Then $A/\nn=S$ and $A/\mm=S'$ where is the polynomial ring $K[y_1,\ldots,y_m]$. Before stating the main theorem we need some preparation.

\begin{Lemma}
\label{veryeasy}
Let $M$ be a finitely generated bigraded $A$-module. Then the following holds:
\begin{enumerate}
\item[{\em (a)}] There exists an integer $s$ such that $M_{k+1}=\nn M_k$ for $k\geq s$.

\item[{\em (b)}] The Krull dimension $\dim M_k$ of $M_k$ is constant for all $k\gg 0$.

We set $\ldim M=\lim_{k\to \infty} \dim M_k$.
\item[{\em (c)}] Let $M'=\Dirsum_{k\geq k_0} M_k$ where $k_0$ is chosen such that $\dim M_k= \ldim M$ and  $M_{k+1}=\nn M_k$ for all $k\geq k_0$. Then
\begin{enumerate}
\item[{\em (i)}] $\dim M'/\nn M'= \ldim M'=\ldim M$;
\item[{\em (ii)}] $\dim M'/\mm M'= \dim M/\mm M$.
\end{enumerate}
\end{enumerate}
\end{Lemma}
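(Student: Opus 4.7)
The plan is to prove the three assertions in order, since each step uses the previous one. For \emph{(a)}, I use that $M$ is finitely generated as a bigraded $A$-module. Choose bihomogeneous generators $g_1,\ldots,g_r$ with bidegrees $(a_j,b_j)$ and set $s=\max_j b_j$. Any element of $M_k$ can be written as $\sum_j f_jg_j$ with $f_j\in A_{(\ast,k-b_j)}$, and when $k>s$ each monomial occurring in $f_j$ has positive $y$-degree, hence factors as $y_i$ times a monomial in $A_{(\ast,k-b_j-1)}$. Regrouping these contributions yields $M_k\subseteq\nn M_{k-1}$; the reverse inclusion is automatic.

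For \emph{(b)}, I examine the ascending chain of annihilators $\Ann_S(M_k)\subseteq S$. Given $k\geq s$ and $f\in\Ann_S(M_k)$, every element of $\nn M_k=M_{k+1}$ has the form $\sum_i y_im_i$ with $m_i\in M_k$, and $f\cdot y_im_i=y_i(fm_i)=0$. Hence $\Ann_S(M_k)\subseteq\Ann_S(M_{k+1})$, and Noetherianity of $S$ forces the chain to stabilize. Since $\dim M_k=\dim S/\Ann_S(M_k)$, the Krull dimension of $M_k$ is eventually constant.

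For \emph{(c)}, first observe that $M'$ is an $A$-submodule of $M$ because $\mm$ preserves $y$-degree while $\nn$ raises it by one. For (i), the choice of $k_0$ gives $(\nn M')_k=\nn M_{k-1}=M_k$ whenever $k\geq k_0+1$ and $(\nn M')_{k_0}=0$; therefore $M'/\nn M'\cong M_{k_0}$ as graded $S$-modules (with $\nn$ acting trivially), so $\dim(M'/\nn M')=\dim M_{k_0}=\ldim M$, while $\ldim M'=\ldim M$ is immediate from $M'_k=M_k$ for all large $k$. For (ii), the identity $\mm M'=M'\cap\mm M$ (verified degree by degree) gives an injection $M'/\mm M'\hookrightarrow M/\mm M$ with cokernel $\Dirsum_{k<k_0}M_k/\mm M_k$; each summand is finite-dimensional over $K$ by graded Nakayama applied to the finitely generated graded $S$-module $M_k$, so the cokernel has Krull dimension $\leq 0$, and the standard short-exact-sequence argument $\dim B=\max(\dim A,\dim C)$ yields $\dim M/\mm M=\dim M'/\mm M'$.

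The main obstacle is the bookkeeping around the boundary $y$-degree $k_0$ in part~(c)(i): one must check both that $\nn M'$ fills up all of $M_k$ for every $k>k_0$ and that it contributes nothing at $y$-degree $k_0$, so that $M'/\nn M'$ collapses precisely to the graded $S$-module $M_{k_0}$. Once this is in place, the remaining dimension identities reduce either to the definition of $k_0$ or to a straightforward finite-length argument for the cokernel in (c)(ii).
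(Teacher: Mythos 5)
Your proof is correct and takes essentially the same route as the paper: part (a) via bihomogeneous generators and the maximum of their $y$-degrees (the paper phrases this by showing $(M/\nn M)_k=0$ for $k>s$, which is the same computation), part (b) via the stabilizing ascending chain of annihilators $\Ann_S(M_k)$, and part (c) via the isomorphism $M'/\nn M'\iso M_{k_0}$ and the identification of $(M/\mm M)/(M'/\mm M')$ with the finite-dimensional module $\Dirsum_{k<k_0}M_k/\mm M_k$. The extra bookkeeping you do at the boundary degree $k_0$ and the identity $\mm M'=M'\cap\mm M$ are fine and only make explicit what the paper leaves implicit.
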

\begin{proof}
(a) Set $N=M/\nn M$. Then $N$ is a finitely generated bigraded $A$-module with $\nn N=0$.  Let $n_1,\ldots,n_r$ be a set of bihomogeneous generators of $N$ with $\deg n_i=(l_i,k_i)$, and let $s$ be the maximum of the $k_i$. We claim that $N_k=0$ for $k>s$. Indeed, let $u\in N_k$.  We may assume that $u$ is bihomogeneous, say $\deg u=(j,k)$. Then there exist bihomogeneous elements $f_1,\ldots,f_r$ with $\deg f_i=(a_i,b_i)$ and $u=\sum_{i=1}^rf_in_i$ such that $b_i+k_i=k$. It follows that $b_i>0$ for all $i$. Therefore each monomial in the support of $f_i$ contains as a factor a monomial in the $y_j$ of degree $b_i$. Since all $y_j$ annihilate each  $n_i$, we see that $u=0$, and hence $N_k=0$. Consequently,   $M_{k+1}=\nn M_k$ for $k\geq s$.

(b) Let  $s$  be as in (a). Then  $M_{k+1}=\nn M_k$ for all $k\geq s$.  So $\Ann_S M_k \subseteq\Ann_S M_{k+1}$ for all $k\geq s$. Since $S$ is Noetherian, there exists $k_0\geq s$ such that  $\Ann M_k =\Ann M_{k+1}$  for all $k\geq k_0$. Then
$\dim M_{k+1}=\dim M_k $ for all $k\geq k_0$.

(c)(i) Since $M'/\nn M'\iso M_{k_0}$ it follows that $\dim M'/\nn M'=\dim M_{k_0}=\ldim M$.

(c)(ii)  $M'/\mm M'= \Dirsum_{k\geq k_0} M_k/\mm M_k$ and  $M/\mm M= \Dirsum_{k} M_k/\mm M_k$. Therefore, $M'/\mm M'$ is an $S'$-submodule of  $M/\mm M$ and
\[
(M/\mm M)/(M'/\mm M')=\Dirsum_{k<k_0}  M_k/\mm M_k.
\]
Since there are only finitely many $k<k_0$ with $M_k\neq 0$, it  follows that $$\dim (M/\mm M)/(M'/\mm M')=0.$$ This implies that $\dim M/\mm M=\dim M'/\mm M'$, as desired.
\end{proof}

In the following we use the convention that the zero polynomial has degree $-1$.

\begin{Theorem}
\label{now main}
Let $M$ be a finitely generated bigraded $A$-module. Then for $k\gg 0$, $ e_j^i(M_{k})$  is a polynomial in $k$,   and
\[
\deg  e_j^i(M_{k}) \leq m+j-1 \quad \text{for}\quad  j=0,\ldots, \ldim M+i-1,
\]
and $e_j^i(M_{k}) =0$ for $j>  \ldim M+i-1$.
\end{Theorem}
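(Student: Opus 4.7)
My plan is to deduce the theorem from Proposition~\ref{rain} via a bigraded Noether normalization, combined with the additivity of higher iterated Hilbert polynomials across a bigraded free resolution. Since all conclusions concern $k\gg0$, Lemma~\ref{veryeasy} lets me replace $M$ by its truncation $\Dirsum_{k\ge k_0}M_k$, and so I may assume throughout that $M_{k+1}=\nn M_k$ for all $k\ge 0$, that $\dim M_k=\ldim M=:d'$ for all $k$, and that $\dim M/\nn M=d'$.

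The main preparatory step, and the one I expect to be the most delicate, is a bigraded Noether normalization. Applying ordinary graded Noether normalization to the $d'$-dimensional graded $S$-module $M/\nn M$ produces linear forms $z_1,\ldots,z_{d'}\in S_1$ such that $M/\nn M$ is finite over $T=K[z_1,\ldots,z_{d'}]$. Let $A'=T[y_1,\ldots,y_m]$, a bigraded polynomial subring of $A$ with $\deg z_i=(1,0)$ and $\deg y_j=(p_j,1)$. A bihomogeneous Nakayama-type induction on the second grading, using $M_{k+1}=\nn M_k$, then shows that any lift of a $T$-generating set of $M/\nn M$ to bihomogeneous elements of $M$ generates $M$ as a bigraded $A'$-module. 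Consequently each $M_k$ is finite over $T$; a going-up argument using $\dim_S M_k=d'$ gives $\dim_T M_k=d'$ for $k\gg0$.

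Now I take a finite bigraded free $A'$-resolution $0\to F_p\to\cdots\to F_0\to M\to0$ with $F_\ell=\Dirsum_t A'(-a_{\ell t},-b_{\ell t})$. For each $k$, extracting the $k$-th component in the second grading yields an exact sequence of graded $T$-modules, so additivity of Hilbert series, and hence of all iterated Hilbert polynomials, gives
\[
P^i_{M_k}(x)=\sum_{\ell,t}(-1)^\ell\,P^i_{A'(-a_{\ell t},-b_{\ell t})_k}(x).
\]
Since $M_k$ and each free $T$-module $A'(-a_{\ell t},-b_{\ell t})_k$ share the common Krull dimension $d'$ for $k\gg0$, both sides expand in the single basis $\bigl\{\binom{x+d'+i-1-j}{d'+i-1-j}\bigr\}_{j=0}^{d'+i-1}$, and equating coefficients produces
\[
e^i_j(M_k)=\sum_{\ell,t}(-1)^\ell\,e^i_j\bigl(A'(-a_{\ell t},-b_{\ell t})_k\bigr),\qquad 0\le j\le d'+i-1.
\]
Finally, Proposition~\ref{rain} applied to $A'$, a bigraded polynomial ring in $d'$ variables of bidegree $(1,0)$ and the same $m$ variables of bidegrees $(p_j,1)$, shows that each summand on the right is a polynomial in $k$ of degree at most $m+j-1$ for $k\gg0$; summing preserves the bound. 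The vanishing $e^i_j(M_k)=0$ for $j>\ldim M+i-1$ is then immediate from the definition of the higher iterated Hilbert coefficients, since $P^i_{M_k}$ has degree exactly $d'+i-1$.
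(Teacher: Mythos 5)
Your proposal is correct and follows essentially the same route as the paper's own proof: truncate $M$ via Lemma~\ref{veryeasy}, apply graded Noether normalization to $M/\nn M$ to realize $M$ as a finite module over $A'=K[z_1,\ldots,z_{d'},y_1,\ldots,y_m]$ via graded Nakayama, take a bigraded free $A'$-resolution, use additivity of the iterated Hilbert polynomials of the $k$-th graded strands, and invoke Proposition~\ref{rain}. The only cosmetic difference is that you make the coefficient comparison in the binomial basis and the finite generation over $A'$ slightly more explicit than the paper does.
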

\begin{proof}
Let $M'$ be defined as in Lemma~\ref{veryeasy}. Since $M'_k=M_k$ for $k\gg 0$, we have that  $e_j^i(M'_{k})=e_j^i(M_{k})$ for $k\gg 0$. Therefore, since $\dim M'/\mm M'=\dim M/\mm M$,  we may replace $M$ by $M'$, and hence may assume from the very beginning that $M$ itself satisfies condition (c)(i) and (c)(ii) of Lemma~\ref{veryeasy}.

Let $J= \Ann_S(M/\nn M)$. Then  $M/\nn M$ is a finitely generated module over the standard graded $K$-algebra $B=S/J$. We may assume that $K$ is infinite, because otherwise we may  replace $K$ by a suitable base field extension.  By the graded Noether Normalization Theorem (see \cite[Theorem 1.5.17]{BH}), there exist linear forms  $z_1,\ldots,z_d \in S$  such that  $B$ is a finitely generated $K[z_1,\ldots,z_d]$-module, where $d= \dim M/\nn M$. It follows that $M$ is a finitely generated bigraded $A'$-module, where $A'=K[z_1,\ldots,z_d, y_1,\ldots,y_m]\subset A$ with $\deg z_i=(1,0)$. Indeed, since $M/\nn M$ is a finitely generated bigraded $B$-module, and $B$ is a finitely generated
bigraded $A'/\nn A'=K[z_1,\ldots,z_d]$ -module, it follows that $M/\nn M$ is a finitely generated bigraded $A'/\nn A'$-module. Therefore, by  Nakayama's Lemma,  $M$ is a finitely generated bigraded  $A'$-module.

Now let $\FF$ be a bigraded minimal free $A'$-resolution of $M$ with
\[
F_r=\Dirsum_sA'(-a_{rs},-b_{rs})\quad \text{for all} \quad  r.
\]
Then $\FF_k$ is a graded free $K[z_1,\ldots,z_d]$-resolution  of $M_k$, where $\FF_k$ is the $k$th graded piece of $\FF$ which is obtained from $\FF$ by restricting the differentials of $\FF$ to the graded components $(F_r)_k=\Dirsum_j (F_r)_{(j,k)}$.
It follows that $P^i_{M_k}(x)=\sum_r (-1)^{r+1}P^i_{(F_r)_k}(x)$. Since each $(F_r)_k$ is a free $S$-module, each  $(F_r)_k$ has dimension $n$.

We write $\sum_r (-1)^{r+1}P^i_{(F_r)_k}(x)=\sum_j(-1)^jf^i_{j,k}\binom{x+d-j}{d-j}$. Since the coefficients  $f^i_{j,k}$ are linear combination of terms of the form $e^i_j(A'(-a,-b)_k$), it follows from Proposition~\ref{rain} that  the  $f^i_{j,k}$ are polynomials in $k$ of degree $\leq m+j-1$.

Since by  Lemma~\ref{veryeasy}, $\dim M_k=d$ for all $k\gg 0$, we see that  $e^i_j(M_k)=f^i_{j,k}$ for all $i$,$j$ and $k\gg 0$. This yields the desired result.
\end{proof}

\begin{Remark}
The fact that  $\deg e^i_j$ independent on $i$ is also consequence of Corollary~\ref{reject}.
\end{Remark}

\begin{Example}
Let $S=K[x_1,x_2]  ,  \mm =(x_1,x_2)$ and $\MR(\mm)=\Dirsum_{k\geq 0}\mm^k$. Let $A=K[x_1,x_2,y_1,y_2]$ with bigrading
defined by $\deg(x_i) = (1,0)$ and $\deg(y_i) = (1,1)$, for $i=1,2$. The natural map defined by $x_i \mapsto x_i$ and $y_i\mapsto x_it$, for $i=1,2$, is then a surjective homomorphism. So $\MR(\mm)$ has a bigraded free resolution of the form
\[
 0 \rightarrow A(-2,-1) \rightarrow A \rightarrow \MR(\mm) \rightarrow 0
\]
Hence $e^i_j(\mm^k)=e^i_j(A_k)-e^i_j(A(-2,-1)_k)$. One has $e^i_j(A_k)=(k+1)\binom{k}{j}$ and $e^i_j(A(-2,-1)_k)=k\binom{k+1}{j}$. So $e^i_j(\mm^k)=\frac{k(k-1)\cdots (k-j+2)}{j}(1-j)(k+1)$. Therefore $\deg(e^i_j(\mm^k))=j$ , and by Theorem ~\ref{now main} our upper bound is $j+1$.
\end{Example}

In the special case that  all $p_i$ are the same, we can improve the  upper bound for the degree of the higher iterated  Hilbert coefficients as follows:

\begin{Theorem}
\label{main}
Assume that  $p_1=p_2=\cdots = p_m=p$, and let $M$ be a finitely generated bigraded $A$-module. Then for $k\gg 0$,
$ e_j^i(M_{k})$  is a polynomial in $k$,   and
\[
\deg  e_j^i(M_{k}) \leq \dim M/\mm M+j-1 \quad \text{for}\quad  j=0,\ldots, \ldim M+i-1,
\]
and $e_j^i(M_{k}) =0$ for $j>  \ldim M+i-1$.
\end{Theorem}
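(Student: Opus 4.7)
The strategy is to mirror the proof of Theorem~\ref{now main}, but to apply the graded Noether Normalization Theorem to $M/\mm M$ (viewed over $S' = K[y_1,\ldots,y_m]$) instead of to $M/\nn M$. This reduces the number of $y$-variables from $m$ to $e := \dim M/\mm M$, so that Proposition~\ref{rain}, applied in the equal-weight setting, delivers the sharper bound $e+j-1$ in place of $m+j-1$.

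First I would perform the same preliminary reduction as in Theorem~\ref{now main}: replace $M$ by $M' = \Dirsum_{k\geq k_0} M_k$ using Lemma~\ref{veryeasy}, so that $\dim M_k = \ldim M$ for all $k\geq k_0$ and $\dim M'/\mm M' = \dim M/\mm M$, and enlarge $K$ to assume it is infinite. Since $M/\mm M$ is a finitely generated standard graded module over $S'$ of Krull dimension $e$, the graded Noether Normalization Theorem (\cite[Theorem~1.5.17]{BH}) yields $K$-linear forms $w_1,\ldots,w_e$ in $y_1,\ldots,y_m$ making $M/\mm M$ a finitely generated $K[w_1,\ldots,w_e]$-module. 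The hypothesis $p_1=\cdots=p_m=p$ is crucial here: it ensures that every $K$-linear combination of the $y_j$ lies in a single bigraded component, namely the one of bidegree $(p,1)$, so each $w_l$ is bihomogeneous of bidegree $(p,1)$ in $A$. Setting $A'' = K[x_1,\ldots,x_n, w_1,\ldots,w_e] \subseteq A$ therefore gives a bigraded polynomial subring with $n$ $x$-variables of bidegree $(1,0)$ and only $e$ $y$-variables of bidegree $(p,1)$, and by graded Nakayama (applied to the finite generation of $M/\mm M$ over $A''/\mm A'' = K[w_1,\ldots,w_e]$) the module $M$ becomes a finitely generated bigraded $A''$-module.

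Next, take a bigraded minimal free $A''$-resolution $\FF$ of $M$, with $F_r = \Dirsum_s A''(-a_{rs},-b_{rs})$. The $k$th graded piece $\FF_k$ is a graded free $S$-resolution of $M_k$, so
\[
P^i_{M_k}(x) = \sum_r (-1)^r P^i_{(F_r)_k}(x).
\]
Expanding both sides in the binomial basis and matching coefficients as in the proof of Theorem~\ref{now main}, each $e^i_j(M_k)$ becomes a $\ZZ$-linear combination of terms $e^i_j(A''(-a_{rs},-b_{rs})_k)$. Proposition~\ref{rain} applied to $A''$, whose $e$ $y$-variables all carry the common weight $p$, shows that each such term is a polynomial function in $k$ of degree $e + j - 1 = \dim M/\mm M + j - 1$, which yields the claimed degree bound. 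Finally, $e^i_j(M_k) = 0$ for $j > \ldim M + i - 1$ follows from $\deg P^i_{M_k}(x) = \dim M_k + i - 1 = \ldim M + i - 1$ for $k \gg 0$, by Lemma~\ref{veryeasy}.

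The main subtlety lies in the Noether normalization step and rests entirely on the equal-weights hypothesis: without $p_1=\cdots=p_m$, a generic $K$-linear combination of the $y_j$ would no longer be bihomogeneous in $A$, and the smaller polynomial ring $A''$ would not inherit a compatible bigrading. Once that obstacle is cleared the remainder of the argument is a routine transcription of the proof of Theorem~\ref{now main} with the roles of $\mm$ and $\nn$ interchanged.
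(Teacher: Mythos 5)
Your key new idea --- Noether-normalizing $M/\mm M$ over $S'=K[y_1,\ldots,y_m]$, which is legitimate precisely because $p_1=\cdots=p_m$ makes $K$-linear forms in the $y_j$ bihomogeneous of bidegree $(p,1)$ --- is indeed the ingredient this theorem adds to Theorem~\ref{now main}. But you drop the \emph{other} normalization, and that creates a genuine gap. The paper replaces $A$ by $A''=K[z_1,\ldots,z_d,w_1,\ldots,w_{d'}]$ with $d=\ldim M$ \emph{and} $d'=\dim M/\mm M$; you keep all $n$ variables $x_1,\ldots,x_n$. The reduction of the $x$-variables to $d=\ldim M$ in the proof of Theorem~\ref{now main} is not cosmetic: it forces the free modules $(F_r)_k$ to have Krull dimension equal to $\dim M_k$, so that the coefficients of $P^i_{M_k}$ and of $\sum_r(-1)^rP^i_{(F_r)_k}$ can be compared in the \emph{same} binomial basis with the \emph{same} index $j$. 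With your $A''$, each $(F_r)_k$ is a free $S$-module of dimension $n$, while $\dim M_k=\ldim M=:d$ may be strictly smaller; applying $(-1)^j\Delta^{d+i-j-1}$ and evaluating at $-1$ as in formula (\ref{eij}) gives
\[
e^i_j(M_k)=\pm\sum_r(-1)^r e^i_{\,j+n-d}\bigl((F_r)_k\bigr),
\]
i.e.\ the index shifts by $n-d$. So your assertion that ``each $e^i_j(M_k)$ becomes a $\ZZ$-linear combination of terms $e^i_j(A''(-a_{rs},-b_{rs})_k)$'' fails unless $n=\ldim M$, and Proposition~\ref{rain} then only bounds the degree by $\dim M/\mm M+(n-\ldim M)+j-1$, which is weaker than the asserted $\dim M/\mm M+j-1$.

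A concrete check: take $A=K[x_1,x_2,y_1,y_2]$ with $p_1=p_2=1$ and $M=A/(x_2)$, resolved by $0\to A(-1,0)\to A\to M\to 0$. Here $M_k\iso (S/(x_2))(-k)^{k+1}$, so $e^i_j(M_k)=(k+1)\binom{k}{j}$ of degree $j+1$, whereas the same-index alternating sum $e^i_j(A_k)-e^i_j(A(-1,0)_k)=(k+1)\bigl(\binom{k}{j}-\binom{k+1}{j}\bigr)$ is a different polynomial (of degree $j$); the correct identity uses index $j+1=j+(n-\ldim M)$. The repair is to transcribe the proof of Theorem~\ref{now main} in full rather than ``with the roles of $\mm$ and $\nn$ interchanged'': normalize the $x$-variables down to $z_1,\ldots,z_d$ with $d=\ldim M=\dim M'/\nn M'$ (using Lemma~\ref{veryeasy}(c)(i)) \emph{in addition to} your normalization of the $y$-variables, take the resolution over $K[z_1,\ldots,z_d,w_1,\ldots,w_e]$, and then the rest of your argument goes through verbatim.
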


\begin{proof}
By using the Noether Normalization Theorem, we may replace, as in the proof of Theorem~\ref{now main}, $A$ by $A''=K[z_1,\ldots,z_d,w_1,\ldots,w_{d'}]$ where $d=\ldim M$ and $d'=\dim M/\mm M$. Then by computing the higher iterated Hilbert polynomial by using a bigraded free $A''$-resolution of $M$,  yields, as in the proof of Theorem~\ref{now main},  the desired conclusion.
\end{proof}

The given upper bound for the degree of the higher iterared Hilbert coefficients  of a bigraded $A$-module as given in Theorem~\ref{main} is in general sharp, for example for $M=A$. In more special cases it may not be sharp. Indeed, let
$I\subset S$ be a graded ideal generated by $m$ homogeneous polynomials of  degree $p$, and let $\MR(I)=\Dirsum_{k\geq 0}I^k$ the Rees ring of $I$. Then $\MR(I)$ is a bigraded $A$-algebra with $\MR(I)_k\iso I^k$ and $\dim \MR(I)/\mm \MR(I)=\ell(I)$, which by definition is the analytic spread of $I$. Thus we have

\begin{Corollary}
\label{analytic}
Let $I\subset S$ be a graded ideal generated in a single degree. Then  for all $k\gg 0$, $e^i_j(I^k)$ is a polynomial function of degree $\leq \ell(I)+j-1$.
\end{Corollary}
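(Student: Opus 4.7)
The plan is to view this as a direct application of Theorem~\ref{main} to the Rees algebra of $I$, regarded as a bigraded $A$-module.

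First, I would fix a generating set $f_1,\ldots,f_m$ of $I$ with $\deg f_j = p$ for all $j$, and set $A = K[x_1,\ldots,x_n,y_1,\ldots,y_m]$ with the bigrading $\deg x_i = (1,0)$ and $\deg y_j = (p,1)$. This puts us in the uniform situation $p_1=\cdots=p_m=p$ required by Theorem~\ref{main}. The surjection $A \to \MR(I)$ defined by $x_i \mapsto x_i$ and $y_j\mapsto f_j t$ is bigraded, so $\MR(I)$ becomes a finitely generated bigraded $A$-module. Moreover, $\MR(I)_k$ is precisely $I^k t^k$, isomorphic as a graded $S$-module to $I^k$; hence $e^i_j(I^k) = e^i_j(\MR(I)_k)$ for every $i,j,k$.

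Next, I would identify $\MR(I)/\mm\MR(I)$ with the fiber cone $\MF(I)$ of $I$. By definition, the analytic spread $\ell(I)$ of $I$ is the Krull dimension of this fiber cone, so
\[
\dim \MR(I)/\mm\MR(I) = \ell(I).
\]
With this identification in hand, Theorem~\ref{main} applied to $M = \MR(I)$ yields, for $k\gg 0$, that $e^i_j(\MR(I)_k)$ is a polynomial in $k$ of degree at most $\dim \MR(I)/\mm\MR(I) + j - 1 = \ell(I) + j - 1$, and the corollary follows.

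There is no real obstacle here: the argument is a straightforward specialization of Theorem~\ref{main} to the Rees algebra, once one observes that the equality of the $y_j$-degrees is exactly the single-degree hypothesis on $I$, and that the quotient $\MR(I)/\mm\MR(I)$ is the standard object whose dimension is, by definition, the analytic spread of $I$.
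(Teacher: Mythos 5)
Your argument is correct and is essentially the paper's own: the paper likewise realizes $\MR(I)$ as a finitely generated bigraded $A$-module with $p_1=\cdots=p_m=p$, identifies $\MR(I)_k\iso I^k$ and $\dim\MR(I)/\mm\MR(I)=\ell(I)$ (the analytic spread, by definition), and then invokes Theorem~\ref{main}. No gaps.
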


In  case that $I$ is $\mm$-primary, one has  $e^i_0(I^k)=1$ for all $i$ and $k$ so that $\deg e^i_0(I^k)=0$, while the formula in Corollary ~\ref{analytic} gives the degree bound $n-1$, since $\ell(I)=n$.

\section{The higher iterated  Hilbert coefficients of the graded components of Tor and Ext}
\label{3}
Let $M$ be a graded $S$-module and $N=\Dirsum_{i,j\in \NN}N_{(i,j)}$  bigraded $A$-module. We will see that $\Tor_i^S(M,N)$ and $\Ext^i_S(M,N)$ are naturally bigraded $A$-modules. Thus we may then study the higher iterated  Hilbert coefficients of the graded components of these modules.

\medskip
Let $U$ by a finitely generated graded $S$-module, and $V$ be a finitely generated  bigraded $A$-module. We first notice that
\[
U\tensor_S V \quad \text{and}\quad \Hom_S(U,V)
\]
are bigraded $A$-modules. Indeed,
\[
(U\tensor_S V)_{(c,d)}=\Dirsum_k U_k\tensor_K V_{(c-k,d)},
\]
and
\[
\Hom_S(U,V)_{(c,d)} =\{f\in  \Hom_S(U,V)\:\; f(U_i)\subset V_{(i+c,d)} \text{ for all } i \}.
\]
With this bigraded structure as described above we have
\[
(U\tensor_S V)_k=U\tensor_S V_k \quad \text{and}\quad  \Hom_S(U,V)_k=\Hom_S(U,V_k) \quad \text{for all}\quad k.
\]

\begin{Lemma}
\label{torext}
Let $M$ be a finitely generated graded $S$-module and $N$  finitely generated bigraded $A$-module. Then, for all $i$,  $\Tor^S_i(M,N)$ and $\Ext^i_S(M,N)$ are finitely generated bigraded $A$-modules, and
\[
\Tor^S_i(M,N)_k\iso \Tor^S_i(M,N_k) \quad \text{and}\quad \Ext_S^i(M,N)_k\iso \Ext_S^i(M,N_k)\quad \text{for all $i$ and $k$}.
\]
\end{Lemma}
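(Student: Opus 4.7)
The plan is to compute both $\Tor$ and $\Ext$ via a single graded free $S$-resolution $\FF \colon \cdots \to F_1 \to F_0 \to M \to 0$ of $M$, in which each $F_r=\Dirsum_s S(-a_{rs})$ is a finitely generated graded free $S$-module (such a resolution exists since $S$ is Noetherian and $M$ is finitely generated, and by Hilbert's syzygy theorem we may even take it finite). The idea is to transfer the bigraded $A$-structure on $N$ through the complex and then exploit exactness of the $k$-th graded-component functor.

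First I would observe that $F_r\tensor_S N\iso\Dirsum_s N(-a_{rs},0)$ and $\Hom_S(F_r,N)\iso\Dirsum_s N(a_{rs},0)$ are finitely generated bigraded $A$-modules, where the $A$-module structure is the one described just before the lemma statement. The differentials of $\FF\tensor_S N$ and $\Hom_S(\FF,N)$ are induced from $S$-linear bihomogeneous maps and hence are bihomogeneous $A$-linear. Therefore their homology modules
\[
\Tor^S_i(M,N)=H_i(\FF\tensor_S N)\quad\text{and}\quad \Ext^i_S(M,N)=H^i(\Hom_S(\FF,N))
\]
are subquotients of finitely generated bigraded $A$-modules, and since $A$ is Noetherian, they are themselves finitely generated bigraded $A$-modules.

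For the compatibility with the $k$-th graded piece, I would use the key identities recalled just before the lemma, namely $(U\tensor_S V)_k=U\tensor_S V_k$ and $\Hom_S(U,V)_k=\Hom_S(U,V_k)$ for $U$ a finitely generated graded $S$-module and $V$ a finitely generated bigraded $A$-module. The functor $W\mapsto W_k$ from bigraded $A$-modules to graded $S$-modules is exact, since it amounts to extracting a direct summand. Consequently it commutes with taking homology, and applying it to the complexes above gives
\[
\Tor^S_i(M,N)_k=H_i(\FF\tensor_S N)_k=H_i((\FF\tensor_S N)_k)=H_i(\FF\tensor_S N_k)=\Tor^S_i(M,N_k),
\]
and the analogous computation with $\Hom_S(\FF,-)$ yields $\Ext^i_S(M,N)_k\iso \Ext^i_S(M,N_k)$.

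There is not really a hard step here; the argument is essentially bookkeeping of the two structures. The one point that requires a moment of care is that the $A$-action on $F_r\tensor_S N$ and $\Hom_S(F_r,N)$ factors entirely through the action on $N$ (since $A$ acts trivially on the purely $S$-linear factors coming from $F_r$), so the differentials are automatically $A$-linear; once this is verified, both assertions of the lemma follow from the exactness of the second-component grading functor.
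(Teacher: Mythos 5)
Your proof is correct, and for the $\Ext$ half it coincides with the paper's: both of you resolve $M$ by a graded free $S$-resolution $\FF$ and apply the exact functor $W\mapsto W_k$ to $\Hom_S(\FF,N)$. For the $\Tor$ half, however, you take a genuinely different route. The paper chooses a bigraded free $A$-resolution $\GG$ of $N$ and computes $\Tor_i^S(M,N)_k=H_i(M\tensor_S\GG)_k=H_i(M\tensor_S\GG_k)$; this relies on two auxiliary facts: that a free $A$-module is still a (non-finitely generated) free, hence flat, $S$-module, so that $M\tensor_S\GG$ computes $\Tor^S$, and that $\GG_k$ is a graded free $S$-resolution of $N_k$. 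You instead exploit the balancedness of $\Tor$ and reuse the $S$-resolution of $M$, so that $\FF\tensor_S N$ is a complex of finite direct sums of shifts $N(-a_{rs},0)$; this makes the finite generation of $\Tor_i^S(M,N)$ over the Noetherian ring $A$ immediate and lets the identity $(U\tensor_S V)_k=U\tensor_S V_k$ do all the work, with no appeal to the components of an $A$-resolution. Your version is the more uniform and self-contained of the two; the paper's version has the side benefit of exhibiting the mechanism (components of bigraded resolutions) that drives the rest of the article. Both arguments are sound.
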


\begin{proof}
Let $\FF$ bigraded free $A$-resolution of $N$. Then
\[
\Tor_i^S(M,N)_k=H_i(M\tensor_S\FF)_k=H_i((M\tensor_S\FF)_k)=H_i(M\tensor_S\FF_k)=\Tor_i^S(M,N_k).
\]
Here we used that taking the graded components can be exchanged with taking homology, and we also used that $\FF_k$ is a graded free $S$-resolution of $N_k$.

In order to compute $\Ext^i_S(M,N)$ we choose a graded free $S$-resolution $\FF$ for $M$. Then $\Hom_S(\FF, N)$ is a complex of bigraded $A$-modules, and $\Ext^i_S(M,N)=H^i(\Hom_S(\FF, N))$ has a natural bigraded structure. Moreover,
\begin{eqnarray*}
\Ext^i_S(M,N)_k&=& H^i(\Hom_S(\FF, N))_k=H^i(\Hom_S(\FF, N)_k)=H^i(\Hom_S(\FF, N_k))\\
&=&\Ext^i_S(M,N_k) \quad \text{for all $k$}.
\end{eqnarray*}
\end{proof}

As a consequence of Lemma~\ref{veryeasy} we obtain

\begin{Corollary}
\label{dimension}
Let $M$ be a finitely generated graded $S$-module and $N$  finitely generated bigraded $A$-module. Then
the Krull dimension of the finitely generated graded $S$-modules  $\Tor_i^S(M,N)_k$ and $\Ext_S^i(M,N)_k$ are  constant for $k\gg 0$.
\end{Corollary}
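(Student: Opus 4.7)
The plan is to combine Lemma~\ref{torext} with Lemma~\ref{veryeasy}(b). By Lemma~\ref{torext}, the $A$-modules $\Tor_i^S(M,N)$ and $\Ext_S^i(M,N)$ are finitely generated and bigraded, and their $k$-th graded pieces are isomorphic (as graded $S$-modules) to $\Tor_i^S(M,N_k)$ and $\Ext_S^i(M,N_k)$ respectively. In particular, the $S$-modules appearing in the statement are precisely the $k$-th graded components of two finitely generated bigraded $A$-modules.

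With that identification in hand, I would apply Lemma~\ref{veryeasy}(b) separately to each of the two bigraded $A$-modules $\Tor_i^S(M,N)$ and $\Ext_S^i(M,N)$. That lemma asserts that the Krull dimension of the $k$-th graded component of any finitely generated bigraded $A$-module is constant for $k\gg 0$, so by taking the maximum of the two resulting thresholds one produces a single $k_0$ past which both $\dim_S \Tor_i^S(M,N_k)$ and $\dim_S \Ext_S^i(M,N_k)$ are constant, as desired.

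I do not expect any serious obstacle here. The substantive work has already been carried out in Lemma~\ref{veryeasy}(b), where one exploits the eventual equality $M_{k+1}=\nn M_k$ established in part (a) to deduce that the chain $\Ann_S M_k \subseteq \Ann_S M_{k+1}$ is ascending for $k\gg 0$, and then invokes Noetherianity of $S$ to make it stabilize. The only point requiring care is to make sure that $\Tor_i^S(M,N)$ and $\Ext_S^i(M,N)$ genuinely carry natural finitely generated bigraded $A$-module structures, but this is exactly the content of Lemma~\ref{torext}, which constructs them from a bigraded free $A$-resolution of $N$ (respectively a graded free $S$-resolution of $M$) together with the fact that taking graded components commutes with taking homology.
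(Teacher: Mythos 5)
Your proposal is correct and matches the paper's intent exactly: the paper states this corollary immediately after Lemma~\ref{torext} and introduces it with ``As a consequence of Lemma~\ref{veryeasy} we obtain,'' so the argument is precisely the combination of Lemma~\ref{torext} (to realize the Tor and Ext modules as graded components of finitely generated bigraded $A$-modules) with Lemma~\ref{veryeasy}(b). No gaps.
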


Next we want to study further the graded $S$-modules $\Tor_i^S(M,N)_k$ and $\Ext_S^i(M,N)_k$. By the preceding corollary, their Hilbert polynomials have constant degree for large $k$. For $\Tor_i^S(M,N)_k$, these degrees can be bounded as follows

\begin{Proposition}
With the notation and assumptions as before, we have
\[
\dim \Tor_{i+1}^S(M,N)_k\leq  \dim \Tor_i^S(M,N)_k \quad \text{for all $k$}.
\]
In particular, $\dim \Tor_{i+1}^S(M,N)_k\leq \dim (M\tensor_S N_k)$ for all $k$, and hence for $k\gg  0$, the degree of the $j$th iterated Hilbert polynomial of $\Tor_i^S(M,N)_k$ is less than or equal to $\dim(M\tensor_S \ldim N)+j-1$.
\end{Proposition}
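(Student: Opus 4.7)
The plan is to derive the dimension inequality from the rigidity of Tor over regular local rings, and then obtain the remaining assertions by iteration together with the standard dictionary between Krull dimension and the degree of a Hilbert polynomial.

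First I would fix $k$ and recall from Lemma~\ref{torext} that $\Tor_j^S(M,N)_k\iso \Tor_j^S(M,N_k)$ is a finitely generated graded $S$-module. To prove $\dim \Tor_{i+1}^S(M,N)_k\leq \dim \Tor_i^S(M,N)_k$ it is enough to show
\[
\Supp \Tor_{i+1}^S(M,N_k)\subseteq \Supp \Tor_i^S(M,N_k),
\]
so that Krull dimensions can be compared via supports. Since $S$ is a polynomial ring over a field, every localization $S_\pp$ is a regular local ring, in particular of equal characteristic. I would then invoke the Auslander rigidity theorem: for finitely generated modules $X,Y$ over a regular local ring $R$, $\Tor_i^R(X,Y)=0$ forces $\Tor_j^R(X,Y)=0$ for every $j\geq i$. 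Applying this at each prime $\pp$ with $X=M_\pp$ and $Y=(N_k)_\pp$, and using that $\Tor$ commutes with localization, the contrapositive gives exactly the desired support containment.

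Iterating the just-proved inequality from $i+1$ down to $0$ yields
\[
\dim \Tor_{i+1}^S(M,N)_k\leq \dim \Tor_0^S(M,N)_k=\dim(M\tensor_S N_k)
\]
for every $k$, which is the ``in particular'' statement. For the last claim I would use the standard fact that a finitely generated graded $S$-module $U$ has $j$th iterated Hilbert polynomial of degree $\dim U+j-1$. Taking $U=\Tor_i^S(M,N)_k$ and combining with the previous bound gives degree at most $\dim(M\tensor_S N_k)+j-1$. By Corollary~\ref{dimension} both $\dim \Tor_i^S(M,N)_k$ and $\dim(M\tensor_S N_k)$ stabilize as $k\to \infty$, so for $k\gg 0$ the bound can be rewritten in terms of the limiting value $\ldim(M\tensor_S N)$.

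The main obstacle is the appeal to the rigidity of Tor; once it is granted, the rest is routine. The equicharacteristic setting here means that Auslander's original proof is enough, and no recourse to Lichtenbaum's mixed-characteristic extension is necessary. A secondary minor point is the base case $i=0$ of rigidity: if $(M\tensor_S N_k)_\pp=0$ then by Nakayama either $M_\pp=0$ or $(N_k)_\pp=0$, so all higher Tor's vanish at $\pp$ as well, and the chain of inequalities really does reach all the way down to $\Tor_0$.
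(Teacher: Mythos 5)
Your proof is correct, but it reaches the key inequality by a different route from the paper. You invoke the Auslander (and, in general, Lichtenbaum) rigidity theorem for $\Tor$ over regular local rings as a black box: localizing at a prime $\pp\in\Supp\Tor_{i+1}^S(M,N_k)$ and applying rigidity to $M_\pp$ and $(N_k)_\pp$ over the regular local ring $S_\pp$ yields the support containment $\Supp\Tor_{i+1}^S(M,N_k)\subseteq\Supp\Tor_{i}^S(M,N_k)$. The paper does not cite that theorem; instead it performs reduction to the diagonal, writing $\Tor_i^S(M,N_k)\iso H_i(x_1-y_1,\ldots,x_n-y_n;M\tensor_K N_k)$ as Koszul homology, and then uses the elementary rigidity of the Koszul complex ($H_i(\xb;W)=0$ implies $H_{i+1}(\xb;W)=0$, cited as an exercise in Bruns--Herzog), again after localizing at a prime of the support. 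In substance the two arguments coincide --- reduction to the diagonal is exactly how Auslander proved rigidity in the equicharacteristic case --- but the paper's version is self-contained and needs only an exercise-level fact about Koszul complexes, whereas yours rests on a substantial theorem (though, as you correctly observe, the equicharacteristic case suffices here, and your separate treatment of the base case $i=0$ via $\Supp(M\tensor_S N_k)=\Supp M\cap\Supp N_k$ is a sensible precaution). Your derivation of the remaining assertions --- iterating the inequality down to $\Tor_0^S(M,N)_k=M\tensor_S N_k$ and converting the dimension bound into a degree bound for the $j$th iterated Hilbert polynomial, with stabilization for $k\gg 0$ coming from Corollary~\ref{dimension} --- matches what the paper intends (the expression ``$\dim(M\tensor_S\ldim N)$'' in the statement is a typo for the stable value of $\dim(M\tensor_S N_k)$, and you interpret it correctly).
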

\begin{proof}
Let $T=K[y_1,\ldots,y_n]$. We may view $N_k$ a graded $T$-module by setting $y_iu:=x_iu$ for all $i=1,\ldots,n$ and $u\in N_k$.  So $M\tensor_K N_k$ has the natural structure of an  $S\tensor_K T$-module, and
\[
 \Tor_i^S(M,N)_k\iso \Tor_i^S(M,N_k)\iso H_i(x_1-y_1,\ldots,x_n-y_n; M\tensor_KN_k),
\]
where $H_i(\_)$ denotes Koszul homology. (see \cite[Chapter IX,Theorem 2.8]{CE} and \cite[page 101]{S})

Thus in order to see that  $\dim \Tor_{i+1}^S(M,N)_k  \leq \dim \Tor_i^S(M,N)_k$ it suffices to show that
whenever $W$ is a graded module over a polynomial ring $R$, and $\xb$ is a finite sequence of elements of $R$, then $\dim H_{i+1}(\xb;R)\leq \dim H_i(\xb;W)$ for all $i$. To see this,  let $P$ be in the support of  $H_{i+1}(\xb;W)$. Then we have to show that $P$ is in the support of $H_i(\xb;W)$. Since $H_{i+1}(\xb;W_P)=H_{i+1}(\xb;W)_P\neq 0$ it follows from \cite[Exercise 1.6.31]{BH} that $H_i(\xb;W_P)=H_i(\xb;W)_P\neq 0$, and the desired conclusion follows.
\end{proof}

\begin{Corollary}
\label{alwaysalabel}
Let $M$ be a finitely generated graded $S$-module and $N$  finitely generated bigraded $A$-module. Then for all $k\gg 0$,  $e_j^i(\Tor^S_l(M,N_k))$  and $e_j^i(\Ext^l_S(M,N_k))$ are  polynomials in $k$ of degree at most $m-1+j$

 In special the case that $p_i=p$ for all $i$, the degree of $e^i_j(\Tor^S_l(M,N_k))$ is bounded by  $\dim \Tor^S_l(M,N)/\mm \Tor^S_l(M,N)+j-1$  and the degree of $e^i_j(\Ext^l_S(M,N_k))$ is bounded by $\dim \Ext^l_S(M,N)/\mm \Ext^l_S(M,N)$.
\end{Corollary}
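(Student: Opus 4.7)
The plan is to reduce Corollary~\ref{alwaysalabel} directly to the two main results of Section~\ref{2} by passing through Lemma~\ref{torext}. The essential observation is that Lemma~\ref{torext} already equips $T:=\Tor^S_l(M,N)$ and $E:=\Ext^l_S(M,N)$ with the structure of finitely generated bigraded $A$-modules, and identifies their graded components as
\[
T_k\iso \Tor^S_l(M,N_k),\qquad E_k\iso \Ext^l_S(M,N_k).
\]
So the higher iterated Hilbert coefficients appearing in the corollary are precisely the coefficients studied abstractly in Section~\ref{2}, and no further structural work is needed.

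First I would invoke Lemma~\ref{torext} to produce the bigraded $A$-module structures on $T$ and $E$ together with the above graded $S$-module isomorphisms for each $k$. Next I would apply Theorem~\ref{now main} to the finitely generated bigraded $A$-modules $T$ and $E$ in turn. For $k\gg 0$ this yields that $e^i_j(T_k)$ and $e^i_j(E_k)$ agree with polynomial functions in $k$ of degree at most $m+j-1$ in the ranges $0\leq j\leq \ldim T+i-1$ and $0\leq j\leq \ldim E+i-1$ respectively, and vanish outside these ranges. This is exactly the general statement claimed.

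For the refinement under the hypothesis $p_1=\cdots=p_m=p$, I would in the same step replace Theorem~\ref{now main} by Theorem~\ref{main}. Applied to $T$ this yields $\deg e^i_j(T_k)\leq \dim T/\mm T + j - 1$, and the analogous inequality for $E$. These are the sharper bounds asserted in the second half of the statement.

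The only delicate point will be verifying that the abstract bigraded $A$-module structure produced by Lemma~\ref{torext} really induces, on each graded component, exactly the natural $S$-grading on $\Tor^S_l(M,N_k)$ and $\Ext^l_S(M,N_k)$, so that the iterated Hilbert coefficients computed in the bigraded framework genuinely coincide with the ones we care about. Since Lemma~\ref{torext} itself records the graded isomorphism, this compatibility is built into the setup, and the corollary therefore follows as a clean and essentially formal consequence of Theorem~\ref{now main} and Theorem~\ref{main}.
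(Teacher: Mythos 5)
Your proposal is correct and is exactly the route the paper intends: the corollary is stated without proof precisely because it follows immediately by combining Lemma~\ref{torext} (which gives $\Tor^S_l(M,N)$ and $\Ext^l_S(M,N)$ the structure of finitely generated bigraded $A$-modules whose $k$th components are $\Tor^S_l(M,N_k)$ and $\Ext^l_S(M,N_k)$) with Theorem~\ref{now main} for the general bound $m+j-1$ and Theorem~\ref{main} for the equigenerated case. Your closing remark about compatibility of the gradings is the right thing to check, and as you note it is already built into Lemma~\ref{torext}.
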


\begin{Corollary}
\label{ik}
Let $M$ be a graded $S$-module, and $I\subset S$ a graded ideal. Then for $l>1$, $e^i_j(\Tor_l^S(M,S/I^k))$ is polynomial function in $k$ of degree less than or equal to $v(I)+j-1$ where $v(I)$ denotes the number of generators of $I$. If all generators of $I$ have the same degree then $v(I)$ can be replaced by  $\dim \MR(I)/\Ann_S(M)\MR(I)$.
\end{Corollary}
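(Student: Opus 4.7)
The plan is to realize $\Tor_l^S(M, S/I^k)$ as the $k$-th graded component of a suitable finitely generated bigraded $A$-module and then invoke Corollary~\ref{alwaysalabel}. First I fix generators $f_1, \ldots, f_m$ of $I$ with $\deg f_j = p_j$, so $v(I) = m$, and introduce the bigraded ring $A = K[x_1,\ldots,x_n,y_1,\ldots,y_m]$ with $\deg x_i = (1,0)$ and $\deg y_j = (p_j, 1)$. The Rees algebra $\MR(I) = \Dirsum_{k \geq 0} I^k$ is then a finitely generated bigraded $A$-module with $\MR(I)_k = I^k$. From the short exact sequence $0 \to I^k \to S \to S/I^k \to 0$ and the vanishing of $\Tor^S_{\geq 1}(M, S)$, the long exact sequence of Tor yields, for every $l \geq 2$, an isomorphism $\Tor^S_l(M, S/I^k) \cong \Tor^S_{l-1}(M, I^k)$, and Lemma~\ref{torext} identifies the right-hand side with $\Tor^S_{l-1}(M, \MR(I))_k$.

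Applying Corollary~\ref{alwaysalabel} to $N = \MR(I)$ at homological index $l - 1$ then immediately gives the first assertion: for $k \gg 0$, $e^i_j(\Tor^S_l(M, S/I^k))$ is a polynomial in $k$ of degree at most $m - 1 + j = v(I) + j - 1$.

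For the sharper bound in the case $p_1 = p_2 = \cdots = p_m$, set $X = \Tor^S_{l-1}(M, \MR(I))$. Corollary~\ref{alwaysalabel}, which in this setting invokes Theorem~\ref{main}, gives $\deg e^i_j(X_k) \leq \dim X/\mm X + j - 1$, so it remains to show $\dim X/\mm X \leq \dim \MR(I)/\Ann_S(M)\MR(I)$. Since $X$ is a subquotient of a direct sum of copies of $\MR(I)$ (obtained by tensoring a graded free $S$-resolution of $M$ against $\MR(I)$), its $A$-support lies in $\Supp_A(\MR(I))$. Moreover, $\Ann_S(M)$ annihilates every $\Tor^S_{*}(M,-)$, so $\Supp_A(X) \subseteq V(\Ann_S(M)A)$ as well. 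The one non-automatic step, which I expect to be the main (but brief) obstacle, is to verify via Nakayama's lemma, at each prime $P \supseteq \Ann_S(M)A$ with $\MR(I)_P \neq 0$, that $(\MR(I)/\Ann_S(M)\MR(I))_P \neq 0$; this yields $\Supp_A(X) \subseteq \Supp_A(\MR(I)/\Ann_S(M)\MR(I))$, and together with the trivial inequality $\dim X/\mm X \leq \dim X$ delivers the bound $\dim X/\mm X \leq \dim \MR(I)/\Ann_S(M)\MR(I)$, completing the argument.
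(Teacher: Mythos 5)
Your proposal is correct, and its first half (the short exact sequence $0\to I^k\to S\to S/I^k\to 0$, the identification $\Tor_l^S(M,S/I^k)\iso\Tor_{l-1}^S(M,\MR(I))_k$ via Lemma~\ref{torext}, and the appeal to Corollary~\ref{alwaysalabel} for the bound $v(I)+j-1$) coincides exactly with the paper's argument. For the refined bound in the equidegree case you diverge: the paper runs the chain $\deg e^i_j\le \dim\Tor^S_{l-1}(M,I^k)+j-1\le\dim(M\tensor_S\MR(I))+j-1\le\dim\MR(I)/\Ann_S(M)\MR(I)+j-1$, whose middle inequality rests on the unnamed Proposition of Section~\ref{3} (monotonicity $\dim\Tor^S_{i+1}\le\dim\Tor^S_i$, proved there via Koszul homology rigidity), whereas you bound $\dim X/\mm X\le\dim X$ for $X=\Tor^S_{l-1}(M,\MR(I))$ and then control $\dim X$ by the support inclusion $\Supp_A(X)\subseteq\Supp_A(\MR(I))\cap V(\Ann_S(M)A)=\Supp_A(\MR(I)/\Ann_S(M)\MR(I))$, using that $X$ is a subquotient of finitely many shifted copies of $\MR(I)$ annihilated by $\Ann_S(M)$, plus the standard Nakayama identity $\Supp(W/JW)=\Supp(W)\cap V(J)$. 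Your route is more self-contained (it bypasses the rigidity Proposition entirely) and has the merit of making explicit the passage from the quantity $\dim X/\mm X$ appearing in Corollary~\ref{alwaysalabel} to the stated bound, a step the paper leaves implicit in the first inequality of its displayed chain; what it gives up is the intermediate, occasionally sharper, bound $\dim(M\tensor_S\MR(I))+j-1$ and the structural information that the Tor dimensions decrease with homological degree.
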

\begin{proof}

The exact sequence
\[
 0 \rightarrow I^k \rightarrow S \rightarrow S/I^k \rightarrow 0,
\]
implies that $\Tor^S_l(M,S/I^k)\iso \Tor^S_{l-1}(M,I^k)\iso \Tor^S_{l-1}(M,\MR(I))_k$ for all $l> 1$ where $\MR(I))$ is the Rees ring of $I$.  So,  by Corollary~\ref{alwaysalabel}, we see that   $e^i_j(\Tor^S_l(M,S/I^k))$
is a polynomial in $k$  for all $k\gg 0$ of degree less than or equal to $v(I)+j-1$. In the special case that all generators of $I$ have the same degree, Corollary~\ref{alwaysalabel} implies that
\begin{eqnarray*}
\deg(e^i_j(\Tor^S_l(M,S/I^k))&\leq& \dim \Tor^S_{l-1}(M,I^k)+j-1 \leq \dim(M \tensor_S \MR(I))+j-1\\ \nonumber
&\leq& \dim (S/\Ann_S(M)\tensor_S \MR(I))+j-1\\  \nonumber
 &\leq &\dim \MR(I)/(\Ann_S(M)\MR(I))+j-1.\nonumber
\end{eqnarray*}
\end{proof}

\end{document}